\documentclass[12pt,leqno]{article}
\usepackage{amsthm,amssymb,amsmath,amsfonts}
\usepackage{txfonts}

\newtheorem*{mtheorem}{Main Theorem}
\newtheorem*{ftheorem}{Final Theorem}

\newtheorem*{corollary}{Corollary}
\newtheorem*{proposition1}{Proposition 1}
\newtheorem*{proposition2}{Proposition 2}
\newtheorem*{proposition3}{Proposition 3}

\newtheorem*{lemma}{Lemma}

\newcommand {\bs}      {\bigskip}
\newcommand {\ms}      {\medskip}
\newcommand {\hook}    {\hookrightarrow}

\newcommand {\F}       {{\mathcal F}}
\newcommand {\ti}      {\widetilde}
\newcommand {\TD}      {\ti{A_0\cap A_1}}
\newcommand {\e}       {\varepsilon}

\newcommand {\bsk}     {\bigskip}

\newcommand {\bx}      {\hspace{10mm}$\blacksquare$}

\newcommand {\nn}      {\nonumber}
\newcommand{\be}          {\begin{eqnarray}}
\newcommand{\eee}         {\end{eqnarray}}
\newcommand{\bel}[1]      {\begin{equation}\label{#1}}
\newcommand{\ee}          {\end{equation}}
\newcommand {\SECT}[1]
{\section*{\centerline{\normalsize {\bf #1}}}}

\begin{document}
\parindent 1em
\parskip 0mm
\centerline{{\bf A NEW APPROACH TO INTERPOLATION OF COMPACT LINEAR OPERATORS}}
\vspace*{6mm}
\centerline{By~ {\sc Evgeniy Pustylnik}}\vspace*{2mm}
\centerline {\it Department of Mathematics, Technion - Israel Institute of Technology,}\vspace*{2mm}
\centerline{\it 32000 Haifa, Israel}\vspace*{2mm}
\centerline{\it e-mail: evpust@gmail.com}
\bsk\bsk
\renewcommand{\thefootnote}{ }
\footnotetext[1]{ 2020 {\it Mathematics Subject
Classification.}\ 46B50,\ 46B70. \\
\hspace*{6mm} {\it Key Words and Phrases:} Compact operators, interpolation, basic sequences}

\begin{abstract} We prove an abstract theorem on keeping the compactness property of a linear operator after interpolation in Banach spaces.
Our approach consists of two features. Applying the principle ``reductio ad absurdum" we obtain a possibility to carry out
all proofs only for some specially constructed subspaces of the given spaces, e.g. having a common Schauder basis. As a
second feature, we consider in all assertions only embedding operators obtaining the full result just at the end of the
paper. No analytical presentation of operators, spaces and interpolation functors is required and the complex method is
admissible as a particular case.
\end{abstract}
\SECT{Introduction}


\indent
\par Interpolation theory of linear operators was created more than 60 years ago for obtaining properties of such
operators on intermediate spaces of some scale when these properties are known at the endpoint spaces. The scales
were ordered by inclusion with the help of numerical or functional parameters like the Lebesgue spaces $L_p$,
Sobolev spaces $W_p^k$ or spaces $\F_{[\theta]}$, defined by the complex methods. Later on the scales were
replaced by general collections of intermediate spaces for various Banach couples $\vec A=(A_0,A_1)$. Namely,
each of such spaces $A$ must satisfy the embeddings $A_0\cap A_1\hook A\hook A_0+A_1$ where
$$ \|a\|_{A_0\cap A_1}\!=\max\{\|a\|_{A_0},\|a\|_{A_1}\},\quad \|a\|_{A_0+A_1}\!=\inf\{\|a_0\|_{A_0}\!+
   \|a_1\|_{A_1},\ a=a_0+a_1\} $$
and the symbol ``$\hook$" denotes a linear and continuous embedding.

We recall that a rule $\F$, which to any Banach couple $\vec A=(A_0,A_1)$ assigns some intermediate space
$A=\F(\vec A)$, is called an {\it interpolation functor} if, for any other couple $\vec B=(B_0,B_1)$ and any
linear operator $T:A_0\to B_0,\ A_1\to B_1$, this operator is bounded from $A$ to $B=\F(\vec B)$.
Using the closed graph theorem, one can show existence of a constant $C$ such that
\bel{E1}
\|T\|_{A\to B}\le C\max\{\|T\|_{A_0\to B_0},\,\|T\|_{A_1\to B_1}\}
\ee
for any $T$ as above. For more details and definitions of interpolation theory, we refer to the monograph [1].

The problem on interpolation of compactness property is the oldest one  and, from the very beginning, it turned out
to be the most difficult and challenging. The proposed solutions either used some special properties of considered
spaces or constructed various approximation hypotheses in general case. And with all of this, for the complex method
of interpolation (for example), this problem is still far from complete solution.

In the present paper we do not specify any spaces, operators or functors and use only some rarely encountered
properties of bases in Banach spaces. For simplicity, we do not consider cases where some of interpolated couples are
reduced to one space, since such a situation is now investigated as fully as possible (see, e.g., [3], Theorems 2.1 and
2.2). We also exclude other trivial couples, mentioned in [1], Section 2.2.{\bf B}. The general result of our paper
asserts that the interpolation of compact operators is not a separate problem at all, since {\sl the compactness is
interpolated automatically any time when one has a standard interpolation of bounded operators}.

This claim concerns, first of all, operators which are compact at the both endpoint spaces, like the interpolation
of bounded operators (the so-called ``two-sided" compactness). If an operator is compact only at one of endpoints
(``one-sided" compactness), we need an additional condition (2), explained below. Moreover, in the last
section of this paper, we give some necessary and sufficient conditions, replacing (2). \ms

\SECT{Solution of the compactness problem}


\indent
\par {\bf 1.} We start our investigations with embedding operators, since the consequent pass to arbitrary operators was
already considered by M. Cwikel, N. Krugljak and M. Masty\l o in [4] without solution of the general problem.
Since their proof deals only with the complex method and the case where one endpoint of the interpolated couple is
not changed, we repeat it in the last section in a general form.

So, let $(A_0,A_1),\,(B_0,B_1)$ be two nontrivial Banach couples with an interpolation functor $\F$,
defined on them. Let $A_0\hook B_0,\,A_1\hook B_1$ and $J$ be the corresponding embedding operator. Thus,
due to interpolation, $J$ gives an embedding $\F(A_0,A_1)=A\hook B=\F(B_0,B_1)$. For
simplicity, we assume that $\|J\|\le1$, i.e. $\|x\|_{B_i}\le \|x\|_{A_i},\ i=0,\,1$. The problem is to prove
compactness of the embedding $A\hook B$ if such is the embedding $J: A_1\to B_1$ (``one-sided" compactness).

Of course, the ``one-sided" compactness of embedding for an intermediate space may be impossible if this space is
not sufficiently ``distant" from the second space of the couple, where the compactness is not given. This imposes
some additional restrictions on the interpolation functor, which must contain some kind of measure of admissible
``distance". For example, those restrictions are shown as necessary for some cases of real interpolation in the
papers [2], [5] etc.

In our paper such a restriction will be used in the following form:

{\sl There exists a function $W(\alpha,\beta),\
\alpha,\beta>0$, with $\lim_{\beta\to 0}W(\alpha,\beta)=0$ when $\sup\alpha<\infty$, such that
\bel{E2}
\|T\|_{A\to B}\le W(\|T\|_{A_0\to B_0},\|T\|_{A_1\to B_1}) \ee
for any linear operator} $T:A_i\to B_i\ (i=0,1)$.
For example, in the case of a complex interpolation functor $\F_{[\theta]},\ 0<\theta<1$, such a
function exists in the form $W(\alpha,\beta)=\alpha^{1-\theta}\beta^\theta$. \ms

\begin{mtheorem} Under conditions on the spaces $(A_0,A_1),\,(B_0,B_1)$ and the functor $\F$ posed above,
the embedding $A\hook B$ is compact.
\end{mtheorem}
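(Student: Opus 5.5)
We argue by contradiction, following the strategy announced in the abstract. Suppose the embedding $A\hook B$ is not compact. Then there is a bounded sequence $(x_n)\subset A$, $\|x_n\|_A\le 1$, having no Cauchy subsequence in $B$. After passing to a subsequence and taking successive differences, we may assume $\|x_n\|_A\le 2$ and $\|x_n\|_B\ge\delta>0$ for all $n$. Since $J:A_1\to B_1$ is compact, we want the differences to be small in $B_1$ as well. The difficulty is that $x_n\in A$ need not lie in $A_1$. We therefore first approximate: $A_0\cap A_1$ is dense in $A$ in the relevant cases (otherwise we pass to the closure of $A_0\cap A_1$ in $A$, which is what $\TD$ is meant to denote). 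So we may take $x_n\in A_0\cap A_1$. This yields a separated sequence in $B$ that is bounded in $A$.

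The core step is a basic-sequence construction. Using the Bessaga--Pełczyński selection principle, we would extract a block subsequence $y_k=x_{n_{2k}}-x_{n_{2k+1}}$ with the following properties:
\begin{itemize}
\item it is a basic sequence simultaneously in $B$ and in $B_0+B_1$;
\item it is seminormalized in $B$;
\item it is bounded in $A$;
\item by compactness of $A_1\hook B_1$ applied to suitable pieces, $\|y_k\|_{B_1}\to0$ up to decompositions $y_k=u_k+v_k$ with $u_k\in A_0$ bounded and $\|v_k\|_{B_1}$ small.
\end{itemize}
On the closed span $E$ of $(y_k)$, endowed with the restricted norms, we obtain finite-dimensional subcouples. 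The plan is to define the operators $P_N$ projecting onto the tail $\overline{\mathrm{span}}\{y_k:k\ge N\}$ by means of the basis projections. These are uniformly bounded on the $A_0$ and $B_0$ levels, with norm $\le$ the basis constant, while their $A_1\to B_1$ norm tends to $0$ by compactness of $J$ on $A_1$. Then \eqref{E2} gives $\|P_N\|_{A\to B}\le W(\alpha,\beta_N)\to0$, whereas $\|P_N y_N\|_B=\|y_N\|_B\ge\delta'$ and $\|y_N\|_A\le C$. This is a contradiction.

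The main obstacle is making the tail projections $P_N$ into legitimate operators on the couples. They must map $A_i\to B_i$ for both $i=0,1$ with controlled norms, and this requires $(y_k)$ to be a common Schauder basis of the constructed subspaces for all four spaces $A_0,A_1,B_0,B_1$ at once. A direct attempt fails, since a sequence basic in one norm need not be basic in another. I would first try Gelbaum's theorem on simultaneous basic sequences in comparable norms, combined with a diagonal Bessaga--Pełczyński argument performed successively in each norm and refining the subsequence at each stage. Once a common basis exists, the operators are defined on the subcouples, where the functor is applied and \eqref{E2} used. Finally, we must show that the interpolation constant on the subcouple is comparable to the original one; we would handle this by working with complemented pieces, or by accepting the subcouple as the new pair, since the estimate \eqref{E2} is assumed for arbitrary couples on which $\F$ is defined.
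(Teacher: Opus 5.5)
Your outline follows the paper's argument almost step for step: reductio ad absurdum, a supporting sequence in $A_0\cap A_1$, a common basic subsequence, basis projections with small $A_1\to B_1$ norm, and then (2) against the lower bound for $\|y_N\|_B$. Its central step fails as stated. The claim that the tail projections satisfy $\|P_N\|_{A_1\to B_1}\to0$ ``by compactness of $J$'' is false in general. Let $(y_k)$ be a basis of $\ti A_1=[y_k]_{A_1}$ with partial-sum projections $S_N$. Then $\|J(I-S_N)\|_{\ti A_1\to B_1}=\|(I-S_N)^*J^*\|\to0$ if and only if every functional $g\circ J|_{\ti A_1}$, $g\in B_1^*$, lies in the norm-closed span of the biorthogonal functionals (e.g.\ when $(y_k)$ is shrinking), and compactness of $J$ does not give this. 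For a counterexample, take $\ti A_1=\ell_1$, $y_k=e_k$, and the compact injective map $Jx=\bigl(\sum_k x_k,\,(2^{-k}x_k)_{k\ge1}\bigr)$ into $\mathbb{R}\oplus_1\ell_1$. Then $\|Jy_{N+1}\|\ge1$, so $\|J(I-S_N)\|\ge1$ for every $N$. Moreover, $(y_k)$ is bounded only in $A$, not in $A_1$, so compactness of $A_1\hook B_1$ says nothing directly about $\|y_k\|_{B_1}$. Your splitting $y_k=u_k+v_k$ controls only the pieces $v_k$, which do not lie in $[y_k]$ and on which $P_N$ is not defined, so it never enters the estimate. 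The paper's Proposition 3 gets this convergence by transferring the basis projections to $J(\ti A_1)$ with the $B_1$-norm, i.e.\ by treating $(Jz_n)$ as basic in $B_1$ with the same constant. In the example above $(Jy_k)$ is not basic in $B_1$: the compact image of a bounded sequence, if basic, must be norm-null. So the paper's route does not close this gap either. Likewise, the paper's Proposition 1 asserts exactly the transfer of basicness from $A_0\cap A_1$ to $A_0$ and $A_1$ that you rightly say can fail.

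The second gap is the one you flag yourself, and neither of your fixes works. Hypothesis (2) concerns single linear operators on $A_0+A_1$ mapping $A_0\to B_0$ and $A_1\to B_1$, whereas your tail projections live only on the closed spans. Passing to the subcouple $(\ti A_0,\ti A_1)$ gives only $\F(\ti A_0,\ti A_1)\hook A$, not the reverse. So $\sup_N\|y_N\|_{\F(\ti A_0,\ti A_1)}$ may be infinite although $\sup_N\|y_N\|_A<\infty$, and the final inequality $\|y_N\|_B\le\|P_N\|\,\|y_N\|$ loses the bound on its right-hand side; besides, (2) is assumed only for the given couples. The natural way to reverse that embedding is a single projection $Q$ of $A_0+A_1$ onto the span, bounded simultaneously on $A_0$ and on $A_1$. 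With such a $Q$, the operators $(I-S_{N-1})Q$ would already be defined on the whole couple. But no refinement of a diagonal basic-subsequence extraction produces such a $Q$: if, say, $A_0=L_\infty[0,1]$, no infinite-dimensional closed span of a sequence is complemented in $A_0$ at all. The paper's device here, extending the rank-one operators $P_n-P_{n-1}$ by Hahn--Banach separately to $A_0$ and to $A_1$, does not give one operator either. The two extended functionals need not agree on $A_0\cap A_1$. A common extension in $A_0^*\cap A_1^*$ with norms at most $a$ on $A_0$ and $b$ on $A_1$ exists if and only if the coordinate functional satisfies $|f(x)|\le a\,K(b/a,x;A_0,A_1)$ on the span, whereas the basis controls it at best by the larger $K$-functional of the subcouple. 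Finally, your reduction to $x_n\in A_0\cap A_1$ via the closure of $A_0\cap A_1$ in $A$ is unjustified. Non-compactness of $A\hook B$ does not by itself yield a non-compact sequence in that closure unless the closure is all of $A$, which fails in general (e.g.\ for $(\cdot)_{\theta,\infty}$, which satisfies (2) with $W(\alpha,\beta)=\alpha^{1-\theta}\beta^\theta$). These are exactly the points where the one-sided problem is hard; the paper's introduction notes it is unresolved for the complex method. What is missing is a genuinely new ingredient, not a finer extraction.
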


We shall use the following standard way for beginning the proof. We suppose, on the contrary, that the embedding $A\hook B$
is not compact. Then there exists a bounded infinitely dimensional sequence $(x_n)\subset A,\ x_n\neq0$, with no convergent
subsequence in $B$. It follows immediately from this that the sequence of the norms $\|x_n\|_B$ is bounded from
below too, namely, $\inf\|x_n\|_B>0$, since every unbounded from below set necessarily contains a sequence converging to zero.
The infinite dimension of the sequence $(x_n)$ also follows from its non-compactness in $B$, because each bounded set with
finite dimension is compact in any topology.

Surely, there are infinitely many such sequences for any two spaces with non-compact embedding $A\hook B$. Each of these sequences
will be called below {\it supporting sequence} of the given embedding. Due to embedding inequalities, we obtain the following relations:
\bel{E3} 0<\inf\|x_n\|_B\le\inf\|x_n\|_A,\qquad \sup\|x_n\|_B\le\sup\|x_n\|_A<\infty.
\ee

Let us point out also the following two properties:

1) any subsequence of the supporting sequence is supporting as well for the same embedding;

2) the elements of any supporting sequence have equivalent norms in the spaces $A$ and $B$, i.e. $\|x_n\|_A\sim\|x_n\|_B$. \ms

Now we show that a supporting sequence can be chosen consisting only of elements from $A_0\cap A_1$. It is enough for this to show
that the number of such elements, belonging to $A_0\cap A_1$, is infinite. Otherwise, infinitely many elements from our supporting
sequence may belong to the spaces $A_0$ or $A_1$ separately. But the space $A_1$ is embedded into $B_1$ compactly, that is, any
infinite amount of elements from $A_1$ has there a converging sequence. Otherwise, if these extra elements are all in $A_0$ (and also
in $A$, because they belong to the supporting sequence), we can choose an operator $T$ equal zero on the whole $A_1$, but not on those
extra elements from $A$. In this case we obtain a contradiction to the inequality (2).

Thus we obtained possibility to take all supporting sequences from $A_0\cap A_1$. Because in all future discussions, we shall
use only elements from these sequences, we may (and will) suppose that the space $A_0\cap A_1$ is dense in either of spaces $A_0, A_1$
so that they are regular (see [1], page 116). Consequently, the space $A$ is also regular as interpolation one. Remark by the way
that many interpolation methods (for example, the most important for us complex method $\F_{[\theta]}$) give regular interpolation
spaces by definition. Therefore, for them, our explanations before may be omitted.

Another important role in our following proofs will be played by the concept of {\it basic sequences}. Recall that a sequence
$(x_n)$ in a Banach space $E$ is said to be basic one if $(x_n)$ is a Schauder basis of its closed linear hull $[x_n]_E$. (The
subscript $E$ is added here for stressing the norm in which the closure of a hull of $(x_n)$ is taken, since the same sequence may be
considered as a basic one in different spaces.) Like for the supporting sequences, the basic sequence also may be reduced, remaining to
be a basis, but in a new, reduced space (see [7], Proposition 4.1(a), page 27). \bs

{\bf 2.} Now we choose some supporting sequence $(x_n)\subset A$ as initial set for all our following constructions. As shown in
Theorem 1.3 ([8], page 53), the inequalities (3) above ensure that $(x_n)$ has a basic subsequence $(x_{n_k})$ always when $(x_n)$
does not have any weak limiting point $x_0\neq 0$. Otherwise, if such a weak limiting point $x_0$ does exist, we obtain a slightly
another (shifted) basic sequence $(x_{n_k}-x_0)$ which also belongs to $A$ and satisfies inequality (3) even if $x_0\notin (x_n)$.
Moreover, the sequence $(x_n-x_0)$ is supporting itself, hence we needn't separate this case always below. Anyway, we may conclude
that each supporting sequence $(x_n)$ generates (immediately or after shift) a basic subsequence, satisfying (3), which will be
denoted by $(y_n)$. \ms

For the next step of our proof, we refer to the mighty Corollary 1.4 ([8], page 56): {\sl the existence of a basic subsequence
$(x_{n_k})$ for arbitrary given sequence $(x_n)$ from an arbitrary given Banach space $E$ is equivalent to existence of a total
space $V\subset E^*$ such that}
\bel{E4} \lim_{n_k\to\infty}\frac{f(x_{n_k})}{\|x_{n_k}\|_E}=0
\qquad \text{for all $f\in V$}.
\ee
We shall use this assertion in two opposite directions. For the space $A$, we have got already the basic sequence $(y_n)$,
following from its supporting sequence. Therefore the relation (4) is fulfilled after replacing $(x_{n_k})$ by $(y_n)$ and $E$
by $A$. Passing to the space $A_0\cap A_1$, we recall that it is dense in the regular space $A$ and thus $A^*\subset(A_0\cap A_1)^*$.
Consequently, we may use the same set $V$ for the space $A_0\cap A_1$ as well. Moreover, we get the inequality
$$
\|y_n\|_{A_0\cap A_1}
\ge\alpha\|y_n\|_A, \ ~~~n=1,2,\dots,
$$
with some embedding constant $\alpha$ that additionally confirms (4) with new sequence $(y_n)$ and
space $A_0\cap A_1$. Using the opposite direction of the same assertion, we obtain the existence of a basic sequence (say $(y_{n_k})$)
in the space $A_0\cap A_1$. \ms

The reader could easily see that the basic sequences, defined in the Section 2, are included one into another:
$$ (y_n)=(x_{n_k})\subset(x_n), \qquad (y_{n_k})\subset(y_n). $$
Thus $(y_{n_k})$ is included into all other sequences and may be taken as a basic one in various spaces, for example, in $A$ and $A_0\cap A_1$.
Let us re-denote $z_n:=(y_{n_k})$ and define
$$
\ti A:=[z_n]_A,\ \TD:=[z_n]_{A_0\cap A_1}.
$$
Analogously we shall use the sign ``tilde" for other
spaces with bases reduced from $(z_n)$ (for simplicity, we sometimes omit the words ``reduced from"). \ms

\begin{lemma} Let the spaces $A, B$ be defined on the base of some common variety of elements $\{x\}$ so that their sum $A+B$ may be
considered. If these spaces have a common basis $(z_n)$ then all basic coordinates of a given element $x$ are the same.

Namely, if $x=\sum_{n=1}^\infty a_nz_n$ in the space $A$ and $x=\sum_{n=1}^\infty b_nz_n$ in the space $B$,
then $a_n=b_n$ for all $n$.
\end{lemma}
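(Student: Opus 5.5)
The plan is to compare both expansions in a single ambient space and then use uniqueness of coordinates with respect to a basis there. The hypothesis that the sum $A+B$ ``may be considered'' means $(A,B)$ is a compatible couple. Then both $A\hook A+B$ and $B\hook A+B$ are continuous embeddings of norm at most one. The series $\sum a_nz_n$ converges to $x$ in $A$, hence also in $A+B$. Likewise $\sum b_nz_n$ converges to $x$ in $B$, hence in $A+B$. Subtracting, with $c_n:=a_n-b_n$, we get
$$
\sum_{n=1}^\infty c_nz_n=0 \qquad\text{in } A+B,
$$
and the lemma reduces to showing that this forces $c_n=0$ for all $n$.

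\textbf{Case of an embedding.} In the situation where the lemma is applied, one of the spaces is embedded in the other, e.g.\ $A\hook B$, or $\TD\hook\ti A$ in the setting above. Then $A+B=B$ with an equivalent norm, and the reduction closes the argument at once. The identity $\sum c_nz_n=0$ holds in $B$. Since $(z_n)$ is a Schauder basis of its closed hull in $B$, the representation of $0$ is unique, so every $c_n=0$. I would present this case first, since it is the form used later.

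\textbf{General case.} The extra ingredient needed is that $(z_n)$ is at least minimal in $A+B$. That is, for each $m$ there should be a functional $f_m\in(A+B)^*$ with $f_m(z_n)=\delta_{mn}$. Given such functionals, applying $f_m$ to the convergent series gives $c_m=f_m(0)=0$. My first attempt would be to show that the coordinate functionals $z_m^*$ of the basis in $A$ (equivalently in $B$) are bounded on $\mathrm{span}(z_n)$ in the $(A+B)$-norm. The plan is to estimate $|c_m|$ for a finite combination $y=\sum c_nz_n$ from an arbitrary decomposition $y=u+v$ with $u\in A$, $v\in B$. The tools would be the basis constants $K_A,K_B$, the bound $|z_m^*(w)|\le 2K\|w\|/\|z_m\|$, and Hahn--Banach to extend the resulting functional to all of $A+B$.

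\textbf{Expected obstacle.} This last estimate is the step I expect to be hardest. In a decomposition $y=u+v$, the pieces $u$ and $v$ need not lie in the closed hulls $[z_n]_A$ or $[z_n]_B$, so their coordinates are not directly available. If the direct estimate fails, I would fall back on the structure available in the paper. The $z_n$ lie in $A_0\cap A_1$ and have equivalent norms in all the spaces involved, by property 2) of supporting sequences. Using this, I would arrange that one of the two spaces embeds into the other on $[z_n]$. This returns the argument to the embedding case, where uniqueness follows immediately as shown above.
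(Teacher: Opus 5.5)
Your reduction to $\sum c_nz_n=0$ in $A+B$ is correct, and so is your nested case: if $A\hook B$, the series converges in $B$, where $(z_n)$ is a basis. You have also found exactly the point the paper's one-line proof passes over. The paper writes $0=\sum(a_n-b_n)z_n$ and appeals to uniqueness of the zero expansion ``in any basis''. But that series is only known to converge in $A+B$, where $(z_n)$ is not known to be a basis.

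The genuine gap in your proposal is the general case, and no estimate will close it, because the lemma is false without an extra hypothesis. In $c_0$ put $z_1=e_1$ and $z_n=e_n/n-e_{n-1}/(n-1)$ for $n\ge2$, so that
\[
\sum_{n\le N}a_nz_n=\sum_{m<N}\frac{a_m-a_{m+1}}{m}\,e_m+\frac{a_N}{N}\,e_N .
\]
Let $E$ be the space of scalar sequences with $a_{2k-1}/k\to0$ and $a_{2k}\to0$, normed by $\sup_k|a_{2k-1}|/k+\sup_k|a_{2k}|$. Let $F$ be defined the same way with odd and even indices exchanged. The unit vectors form a basis of each.

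The display gives $\|\sum_{n\le N}a_nz_n\|_{c_0}\le2\|a\|_E$, so $a\mapsto\sum a_nz_n$ extends to a bounded map $E\to c_0$. This map is injective: looking at the $m$-th coordinate, any sequence in its kernel is constant, and the only constant sequence in $E$ is $0$. The same holds for $F$. The images $A$ and $B$, with the transported norms, are Banach spaces continuously embedded in $c_0$, and each has $(z_n)$ as a basis.

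Now $u=(1,0,1,0,\dots)\in E$ and $v=(0,-1,0,-1,\dots)\in F$ satisfy $u_m-u_{m+1}=v_m-v_{m+1}=(-1)^{m+1}$. Hence the element $x=\sum_m(-1)^{m+1}e_m/m$ of $A\cap B$ equals $\sum_kz_{2k-1}$ in $A$ and $-\sum_kz_{2k}$ in $B$, with different coordinates. In particular $\sum_nz_n=0$ in $A+B$, so $(z_n)$ is not minimal there. The bound you hoped for also fails: $z_1^*$ equals $1$ on every partial sum $\sum_{n\le 2K}z_n$, yet these partial sums tend to $0$ in $A+B$.

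Your fallback does not help either. Equivalence of the norms of the individual vectors $z_n$ in two spaces gives no control over the norms of their linear combinations. So it yields no embedding of $[z_n]_A$ into $[z_n]_B$. Moreover, property 2) only compares $A=\F(A_0,A_1)$ with $B=\F(B_0,B_1)$.

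It is also not true that the lemma is only needed for nested spaces. The paper uses it to let the same $P_n$ act on both $\ti A_0$ and $\ti A_1$, and these are not nested in general.

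What can honestly be proved is the lemma under an additional assumption. Examples are $A\hook B$, or continuity of the coordinate functionals of $(z_n)$ in the $(A+B)$-norm. Another is $x\in[z_n]_{A\cap B}$ with $(z_n)$ basic in $A\cap B$, which is what Proposition 2 is meant to supply in the paper's setting.
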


\par {\it Proof.} Consider difference of the representations of $x$:\ \ $0=\sum_{n=1}^\infty (a_n-b_n)z_n$. \ But a zero-element may have
only zero-coordinates in any basis.

Thus $a_n=b_n$ for all $n$ as required.\bx

\begin{corollary} The same fact occurs for finite (partial) sums $\sum_{k=1}^n a_kz_k$ with any given $n$.
\end{corollary}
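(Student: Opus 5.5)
The Corollary says that the partial sums $\sum_{k=1}^n a_kz_k$ of an element $x$ are the same whether we expand $x$ in $A$ or in $B$. I would deduce it directly from the Lemma, since the partial sums are determined entirely by the coefficients. Take an element $x$ in the common variety, with expansions $x=\sum_{k=1}^\infty a_kz_k$ in $A$ and $x=\sum_{k=1}^\infty b_kz_k$ in $B$. The Lemma gives $a_k=b_k$ for all $k$. For any fixed $n$ the $n$-th partial sum in $A$ is $S_n^A x=\sum_{k=1}^n a_kz_k$, and in $B$ it is $S_n^B x=\sum_{k=1}^n b_kz_k$. These are finite linear combinations of the same vectors $z_1,\dots,z_n$ with identical coefficients, so they are literally the same element of the common variety. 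So $S_n^A x=S_n^B x$ as elements, although of course their norms in $A$ and $B$ may differ.

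The second reading I would address is that a finite sum $\sum_{k=1}^n a_kz_k$, taken as an element in its own right, has the same coordinates in both bases. This expression is itself an expansion of the element in $A$, with coefficients $a_1,\dots,a_n,0,0,\dots$, and likewise in $B$. By uniqueness of coordinates in a Schauder basis, or equivalently by the Lemma applied to this element, these are its coordinates in both spaces. This also shows that the coordinate functionals $z_k^*$ of $A$ and of $B$ agree on the common elements, and hence so do the basis projections $P_n$.

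The only point needing care is that the Lemma's argument subtracts two series converging in different norms. The difference is then a series converging in $A+B$, with both series convergent there because $A,B\hook A+B$. To conclude that the difference has zero coordinates, one needs $(z_n)$ to be basic in $A+B$ as well, or at least that the coordinate functionals extend continuously to $[z_n]_{A+B}$. I would take this as given, since it is assumed in the Lemma, and so the Corollary needs no new input beyond it.
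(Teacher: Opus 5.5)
Your argument is the paper's own: the Corollary is just the Lemma's conclusion $a_k=b_k$ read off for $k\le n$ (the paper gives no separate proof), and your second reading holds unconditionally by uniqueness of expansions within each space, though it only makes the coordinate functionals of $A$ and $B$ agree on the linear span of the $z_k$, not on all common elements. One correction, to your last paragraph: $\omega$-independence of $(z_n)$ in $A+B$ (that $\sum c_nz_n=0$ there forces $c=0$) is not among the Lemma's hypotheses, which ask only for a common basis; the one-line proof uses it silently, and it does not follow from those hypotheses --- glue two copies of $\ell_2$ along the diagonal map $e_n\mapsto w_ne_n$, with $\sum w_n^2/n^2=\sum (nw_n)^{-2}=\infty$, and also identify $x_0=(1/n)_n$ in the first copy with the same sequence in the second; the divergent sums put $x_0$ outside both the domain and the range of that map, so this is a genuine Banach couple with common basis $(z_n)$, yet the common element has coordinates $1/n$ in $A$ and $1/(nw_n)$ in $B$ --- so you should state it as an explicit extra assumption rather than attribute it to the Lemma.
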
 \ms

{\bf 3.} Let us start to investigate the basic properties of the sequence $(z_n)$ for the spaces $A_0$ and $A_1$ individually. \ms

\begin{proposition1} The sequence $(z_n)$ is basic one in either of spaces $A_0$, $A_1$, defining the spaces with a basis $\ti A_0=
[z_n]_{A_0}$ and $\ti A_1=[z_n]_{A_1}$.
\end{proposition1}

\par {\it Proof.} Denote by $s_n(x)$ the partial sums of the known already basic expansion in the space $\TD$, namely,
$s_n(x)=\sum_{k=1}^n z_kf_k(x)$. The Theorem 4.1 from [7, page 25] asserts that $(z_n)$ is a basis in a given space $E$ if and only if
$E=[z_n]_E$ and the corresponding operators $s_n$ are bounded in $E$ on the whole, i.e. if all $\|s_n\|_E\le M$ for some constant $M$.
For example, such a constant exists for the space $\TD$. Our problem is to state the existence of analogous bounds for the spaces
$\ti A_0$ and $\ti A_1$.

Due to full symmetry of spaces $\ti A_0, \ti A_1$, we proceed our proof for one of these spaces, say, for $\ti A_0$. Using the Hahn-Banach
theorem, we may extend any of functionals $f_k(x)$ to the whole space $\ti A_0$. Moreover, due to density of $\TD$ in $\ti A_0$, these
extensions are unique, defining (as sums) continuous linear operators $s_n(x), n=1,2,\ldots$ on the whole $\ti A_0$.

Any continuous linear operator is bounded, namely, $\|s_n(x)\|_{A_0}<\infty$ for each $x\in\ti A_0$ like the case of $x\in\TD$.
A pass from individual to global boundedness of operators is the matter of so called ``principle of uniform boundedness" which for
Banach spaces is proved, e.g., as ``Theorem of Banach-Steinhaus" in the monograph [6, Ch.III, \S4]. Below we shall use some details
of this proof.

Suppose that the operators $s_n(x), n=1,2,\dots$ are not uniformly bounded in $\ti A_0$ on the whole.
This implies that for any natural $k$ there exist a point $x_k\in\ti A_0$ and an operator $s_{n_k}$ such that $\|s_{n_k}(x_k)\|_{A_0}>k$.
Due to density of $\TD$ in $\ti A_0$ and continuity of the operator $s_{n_k}(x)$, there exists a point $x'_k\in\TD$ so
close to $x_k$ as to satisfy the similar inequality $\|s_{n_k}(x'_k)\|_{A_0}>k$. But the norms in $A_0$ are not greater than the norms
in $A_0\cap A_1$, hence $\|s_{n_k}(x'_k)\|_{A_0\cap A_1}>k$ as well.

The main fact from the above mentioned proof is that any family of linear operators, being unbounded in some space $E$, is also
unbounded on each ball from this space---for example, on unit one. Therefore we may take all $x'_k$ with $\|x'_k\|_{A_0\cap A_1}\le 1$,
getting that $\|s_{n_k}\|_{A_0\cap A_1}\longrightarrow\infty$ for $k\to\infty$ in the contrast to existence of a basis in $\TD$. This
proves the existence of a basis in the space $\ti A_0$ as required. The case of $\ti A_1$ is similar.\bx \ms

\begin{proposition2} The intersection of spaces $\ti A_0$ and $\ti A_1$ coincides with $\TD$.
\end{proposition2}
\par {\it Proof.} We already have seen how the spaces $\ti A_0$ and $\ti A_1$ may be obtained via passing to limits from $\TD$ by
the norm of $A_0$ or $A_1$. Recall that $\|x\|_{A_0\cap A_1}=\max\{\|x\|_{A_0},\|x\|_{A_1}\}$. Therefore the norms in $A_0\cap A_1$ can be
only in two forms: $\|x\|_{A_0}$ or $\|x\|_{A_1}$ and the limiting process in either of spaces $A_0, A_1$ gives different results. When we
consider the intersection of the resulting spaces $\ti A_0$ and $\ti A_1$, all their acquisitions disappear as not common and remain only
initial elements from $\TD$. Thus we obtain a proof of the equality $\ti A_0\cap\ti A_1=\TD$.

Of course, it may be that $\|x\|_{A_0}=\|x\|_{A_1}$, but this ``equilibrium" disappears immediately with start of the limiting process for
one of norms.\bx \ms

Let us start to exploit some standard properties of the spaces $\ti A_0$ and $\ti A_1$ as having a proper basis.
For arbitrary $n=1,2,\dots$, we define finite dimensional operators $P_nx=\sum_{k=1}^n a_kz_k$, where $x\in \ti A_0$ or
$x\in \ti A_1$ and $a_k$ are the basic coordinates of the element $x=\sum_{k=1}^\infty a_kz_k$. (Here we change $(z_n)$
to $(z_k)$ in order not to confuse with numeration of operators $P_n$.) Recall, especially, that $a_k$ are the same in
all spaces with common basis. Due to properties of bases, we have got a constant $C\ge1$ such that $\|P_nx\|_{\ti A_i}\le
C\|x\|_{\ti A_i}, i=0,\,1$, for all $n$ and for all $x\in\ti A_0$ or $x\in\ti A_1$, that is, the operators $P_n$ are
uniformly bounded. Moreover, $\lim_{n\to\infty}\|P_nx-x\|_{\ti A_i}=0$ if $x\in\ti A_i,\ i=0,\,1.$ \ms

 {\bf 4.} For the following discussion, we have to return explicitly to the embedding operator $J$ which is compact as an
 operator from $\ti A_1$ to $B_1$. Recall that $J$ is not a usual operator and its applying has some peculiarities. For example, it needn't be mentioned in a simple transfer from one space to another when it is equivalent to identity:\ $Jx=Ix=x$. Such a situation was, e.g., in the formula (3) above, where we wrote $\|x_n\|_A$ and $\|x_n\|_B$ for the same element $x_n$ in different spaces. Of course, using an embedding operator becomes convenient for topological expression with convergence, compactness and so on. Let us agree to use the symbol $J$ only within the spaces of the type $B$.

Recall that the space $\ti A_1$ as a subspace of $A_1$ proceeds to be embedded compactly into $B_1$. Hence, taking $S$ to be
a unit ball of $\ti A_1$, we obtain a compact set $JS\subset B_1$. For arbitrary given number $\e>0$, we may construct
an epsilon-net in $JS$ as a collection of cells $\{g_1,\ldots,g_m\}$, having radii no greater than $\e$ and covering all $JS$.
Denote by $\{\bar x_k\}$ the elements $x\in S$, such that the corresponding $J\bar x_k$ lie in all possible cells of the
net, but only one by one. Clear, the number of such elements will be no greater than $m$. \ms

\begin{proposition3} Let $n\to\infty$. Then $J(P_n-I)\to 0$ uniformly by the norm of operators acting from $\ti A_1$ to $B_1$.
\end{proposition3}

\par {\it Proof.} Let a number $\e$ be given and the epsilon-net with the points $\{\bar x_k\}$ be as above. Consider $(y_n)=(Jz_n)$ as a sequence of elements in the space $B_1$. Following terminology of [7] (\S8, page 69), we obtain that the sequence $(z_n)$ {\it strictly dominates} the sequence $(y_n)$ due to a linear mapping $J$ connecting them. Then b)$2^\circ$ from the same \S8 of [7] gives that the series $\sum_{k=1}^\infty a_ky_k$ converges in $B_1$ like the series $\sum_{k=1}^\infty a_kz_k$ in $\ti A_1$ with the same coefficients $(a_k)$. For more comprehension, we may denote $\ti B_1=J(\ti A_1)$, though we need from the $B_1$ only its norm.

Since the basic coefficients remain in $\ti B_1$ the same as in $\ti A_1$, we obtain the same operators $P_ny=\sum_{k=1}^na_k y_k$ with the same properties $\|P_ny\|_{\ti B_1}\le C\|y\|_{\ti B_1}$ and $\lim_{n\to\infty}\|P_ny-y\|_{\ti B_1}=0$ if $y\in\ti B_1$. Due to linearity of the embedding operator $J$, we may write that $P_ny=P_nJx=\sum_{k=1}^na_kJz_k=J(\sum_{k=1}^na_kz_k)=JP_nx$. In result we obtain a crucial inequality for our proving: \ $\|JP_nx\|_{\ti B_1}\le C\|Jx\|_{\ti B_1}$. Finally, taken $x-\bar{x_k}$ instead of $x$, we come to the inequality
$$ \|JP_nx-JP_n\bar{x_k}\|_{\ti B_1}\le C\|Jx-J\bar{x_k}\|_{\ti B_1} $$
again due to linearity of all operators.

Thus the inequality $\|Jx-J\bar x_k\|_{B_1}\le\e$ provides that
$\|JP_nx-JP_n\bar x_k\|_{B_1}\le C\e$ for all $k$. These inequalities both are universal, independent of $n,x$ and
$\bar x_k$ if we agree always to choose the point $J\bar x_k$ as the closest one to $Jx$ among all possible $k$. However
the last needed inequality $\|JP_n\bar x_k-J\bar x_k\|_{B_1}\le\e$ does depend on $\{\bar x _k\}$, requiring that
$n\ge n_k$. Fortunately, the amount of all $k$ depends only on $\e$, and we may require that $n\ge n_\e=\max n_k$.
Joining all three inequalities with the same $x$ into one, we obtain that
\be
\|JP_nx-Jx\|_{B_1}&\le&\|JP_nx-JP_n\bar x_k\|_{B_1}
\nn\\
&+&\|JP_n\bar x_k-J\bar x_k\|_{B_1}+\|Jx-J\bar x_k\|_{B_1}\le (C+2)\e
\nn
\eee
for any $x$ from the unit ball of $\ti A_1$ if $n$ is sufficiently big. Consequently, the Proposition 3 is proven.\bx \ms

{\bf 5.} Now we are able to finish the proof of the Main Theorem, using the inequality (2). Unfortunately, this inequality is
postulated above only for linear operators, defined and continuous on the whole spaces $A_0, A_1$, whereas our Proposition 3
deals with the rather smaller spaces $\ti A_0, \ti A_1$. At the same time, a simultaneous extension of the operators $P_n$
to the whole space $A_1$ (with preservation of norms) may be impossible, even despite of their finite dimension, if the
complement $A_1\backslash\ti A_1$ is not closed in $A_1$.

In order to avoid this problem, we propose to pass to one-dimensional operators $T_n=P_n-P_{n-1}$. Every one-dimensional operator
is equivalent to some linear functional, differing from it by addition of some constant vector-multiplier. Therefore we may apply
the Banach-Hahn theorem, obtaining possibility to consider any $T_n$ on the whole $A_0, A_1$ (although these extensions
may differ outside of $\ti A_0, \ti A_1$). Moreover, because the norms of extended operators did not grow, we can use all estimates
of their norms for needed proofs. For example, we get that $\|T_n\|_{A_0\to B_0}\le 2C$ for $n=1,2,\dots$ Still more valuable
is that due to Proposition 3
 $$ \|T_n\|_{A_1\to B_1}=\|T_n\|_{\ti A_1\to B_1}\le\|P_n-I\|_{\ti A_1\to B_1}+\|P_{n-1}-I\|_{\ti A_1\to B_1}\longrightarrow 0 $$
as $n\to \infty$. Only now the operators $T_n$ became suitable for applying the inequality (2) which gives that
$$ \lim_{n\to\infty}\|T_n\|_{A\to B}=0. $$

Apply now a given operator $T_n$ to the basic element $z_n$ with the same number. Since $P_{n-1}z_n=0$ and $P_nz_n=z_n$, we obtain that
\bel{E5}
\|z_n\|_B=\|T_nz_n\|_B\le\|T_n\|_{A\to B}\|z_n\|_A\longrightarrow 0\quad\text{as $n\to \infty$}, \ee
because all $z_n\in (x_n)$ and $\sup \|x_n\|_A<\infty$ by the right-hand part of inequalities (3).

It remains to say that the relation (5) contradicts to the left-hand part of inequalities (3), and this contradiction proves the
Main Theorem. \ms

{\it Remark}. To our knowledge, an approach to connection between compactness and bases in Banach spaces was firstly described
in the monograph [6] (Ch.V, Theorem 3) but only for one space, without embeddings and interpolation. \ms

\SECT{Supplements}


\indent
\par {\bf 1.} The same proof is applicable to proving interpolation of the ``two-sided" compactness of linear operators without
inequality (2). Namely, if both embeddings $A_i\hook B_i,\ i=1,2$, are compact, we should show that $\|J(P_n-I)\|_{\ti A_0\to B_0}
\rightarrow 0$\ like in the case of the spaces $\ti A_1, B_1$ before, extend operators $T_n$ to the whole space $A_0$ and use the
inequality (1) for estimating the norms $\|T_n\|_{A\to B},\ n=1,2,\dots$ This will lead us to the same contradiction in estimation
of the norms $\|z_n\|_B$. \ms

{\bf 2.} When starting this paper with embedding operators solely, we have promised to consider the general situation later. Now we
want to keep our promise. \ms

Let $T$ be an arbitrary linear operator, acting continuously
from $A_0$ to $B_0$ and compactly from $A_1$ to $B_1$. Define two spaces $Y_i\hook B_i\ (i=0,1)$ consisting of
all $y=Tx,\ x\in A_i$, with norms $\|y\|_{Y_i}=\inf\bigl\{\|x\|_{A_i}\ :\ y=Tx\bigr\}$. As known, every such a space is
called {\it image} of $T$ in the corresponding space $B_i$ and is a Banach space. Present now $T$
as a composition $T=JS$, where the operator $S$ is defined by $Sx=Tx$ for all $x\in A_0+A_1$ but considered as a bounded
operator from $A_i$ to $Y_i,\ i=0,1$. Consequently $J$ will be the embedding operator from $Y_i$ into $B_i$ for the same $i=0,1$.
Applying the functor $\F$, we obtain that $S$ acts boundedly from $A=\F(A_0,A_1)$ into $Y=\F(Y_0,Y_1)$ by standard interpolation
theorems. At the same time, $J$ appears to be compact as an operator from $Y_1$ into $B_1$ due to compactness of the
operator $T$ and thus it is compact as embedding from $Y=\F(Y_0.Y_1)$ into $B=\F(B_0,B_1)$ in virtue of the Main Theorem, proved
above. In result, $T$ is also compact as an operator from $A$ to $B$. \ms

{\bf 3.} {\it Necessity of conditions}. Let us return to ``one-sided" compactness and to embedding operators. As the reader could
see, the principal point of our proof is the inequality (2). Of course, this inequality is rather simple for checking. Moreover,
it is peculiar to many classical interpolation methods. Especially important is that it enables to solve a very long standing
problem on interpolation of compactness by the complex method. However, it is reasonable to ask, to what extent this inequality
is necessary in general? An immediate answer after our proof is: ``the inequality (2) is excessive". Indeed, generally speaking,
we need not know any special analytical form of the function $W(\alpha,\beta)$. We even need not know whether such a function really
exists outside of the basic operators $P_n$. And the last surprise is that we may consider only one-dimensional operators.

Well! All the listed surplus may be excluded from the minimal sufficient conditions, but what else? Could we give any really
``necessary and sufficient" conditions for interpolation of compactness property in all cases? By the way, we need not take a carry
of interpolation itself---it is given from the very beginning. \ms

\begin{ftheorem} Let the interpolation functor $\F$ and the Banach couples $(A_0,A_1)$, $(B_0,B_1)$ be defined as in the Main
Theorem above (without inequality (2)). Let, in addition, the spaces $A_0, A_1$ be regular, namely, the intersection $A_0\cap A_1$ be
dense in each of these spaces. Then for the embedding $A\hook B$ to be compact, it is necessary and sufficient that any bounded
basic sequence $(z_n)$ from $A_0\cap A_1$, converging to zero in the space $B_1$, did the same in the space $B$. \end{ftheorem}

{\it Proof.} In order to prove the sufficiency, it is enough to repeat the proof of the Main Theorem without any change. Only the last,
crucial contradiction will be obtained now directly, without the inequality (2). For the proof of necessity, we must explain, first of all,
how we could test the basic sequences which are not given before and could be absent at all. This may be a real problem for computations,
but theoretically not, because {\sl at least one basic sequence exists in any Banach space} (see, [8], Theorem 1.1, page 48). Taken from
the intersection, the sequence $(z_n)$ belongs to $A, A_0, A_1$. As in the Main Theorem, we denote the closed hull of $(z_n)$ in every of
these spaces by a tilde above and as in Proposition 1 we prove that $(z_n)$ is a basis in all these spaces. Next we define in these reduced
subspaces the basic operators $P_n,\ n=1,2,\dots$ As shown in Proposition 3,
in any case of compact embedding, these operators uniformly tend to $I$. Consequently, the sequence $(z_n)$ converges to zero.

This remark gives us a key to the end of the proof. A given compactness of the embedding $A_1\hook B_1$ implies that the embedding
$\ti A_1\hook B_1$ is compact as well and gives us a basic sequence converging to zero. By assumption of necessity, the same must occur in
the space $B$ - otherwise the embedding $\ti A\hook B$ cannot be compact. Of course, the embedding of the larger space $A$ into $B$ thus
also cannot be compact. \bx
\bsk\bsk

\centerline{\sc Declarations}

\bsk

\centerline{\bf Funding and Competing interests}

\bsk
$\bullet$ The author has no relevant financial or non-financial interests to disclose.

$\bullet$ The author has no competing interests to declare that are relevant to the content of this article.

\fontsize{10.5}{12}\selectfont

\end{document}